\newtheorem{theorem}{Theorem}[section]
\newtheorem{corollary}[theorem]{Corollary}
\newtheorem{definition}[theorem]{Definition}
\newtheorem{example}[theorem]{Example}
\newtheorem{lemma}[theorem]{Lemma}
\newtheorem{proposition}[theorem]{Proposition}
\newtheorem{remark}[theorem]{Remark}
\numberwithin{equation}{section} \textwidth=15.5cm
\begin{document}
\title[Periodic solutions for $p(t)$-Li\'{e}nard equations]{Periodic solutions for $p(t)$-Li\'{e}nard equations with a
singular nonlinearity of attractive type}
\author{Petru JEBELEAN}
\address{Institute for Advanced Environmental Research, West University of Timi\c{s}oara,
Blvd. V. P\^{a}rvan, no. 4, 300223  Timi\c{s}oara, Romania,
E-mail : petru.jebelean@e-uvt.ro}
\author{Jean MAWHIN}
\address{Institut de Recherche en Math\'{e}matique et Physique, Universit\'{e} Catholique de Louvain, Chemin du Cyclotron,
2, 1348 Louvain-la-Neuve, Belgium,
E-mail : jean.mawhin@uclouvain.be}
\author{C\u{a}lin \c{S}ERBAN}
\address{Department of Mathematics,
West University of Timi\c{s}oara,
Blvd. V. P\^{a}rvan, no. 4, 300223  Timi\c{s}oara, Romania, E-mail :
calin.serban@e-uvt.ro}

\begin{abstract}
\noindent We are concerned with the existence of $T$-periodic solutions to an equation of type
$$
\left (|u'(t))|^{p(t)-2} u'(t) \right )'+f(u(t))u'(t)+g(u(t))=h(t)\quad \mbox{ in }[0,T]
$$
where $p:[0,T]\to(1,\infty)$ with $p(0)=p(T)$ and $h$ are continuous on $[0,T]$, $f,g$ are also continuous on $[0,\infty)$, respectively $(0,\infty)$. The mapping $g$ may have an attractive singularity (i.e. $g(x) \to +\infty$ as $x\to 0+$). Our approach relies on a continuation theorem obtained in the recent paper \cite{[HMMT]}, a priori estimates and method of lower and upper solutions.
\end{abstract}

\maketitle

\noindent 2020 Mathematics Subject Classification: {34B16; 34C25; 34B15; 47H11}

\noindent Keywords and phrases: {$p(t)$-Laplacian, attractive singularity, Brouwer degree, lower and upper solutions}
\bigskip

\section{Introduction}
We deal with the solvability of $p(t)$-Laplacian Li\'{e}nard periodic boundary value problems of type
\begin{equation}\label{pb1}
	\left \{
            \begin{array}{ll}
		      (\phi_{p(t)}(u'))'+f(u)u'+g(u)=h(t)\quad \mbox{ in }[0,T],\\
		      u(0)-u(T)=0=u'(0)-u'(T),
			\end{array}
\right.
\end{equation}
where $p:[0,T]\to(1,\infty)$ is continuous, with $p(0)=p(T)$ and
$$\phi_{p(t)}(x)=|x|^{p(t)-2}x \qquad (t\in [0,T], \; x\in \mathbb{R}).$$

The functions $f:[0,\infty)\to\mathbb{R}$, $g:(0,\infty)\to\mathbb{R}$ are continuous and $h\in C:=C([0,T]; \mathbb{R})$. Setting $C^1:= C^1 ([0,T]; \mathbb{R})$ and
$$C_T^1:=\{v\in C^1\ |\ v(0)-v(T)=0=v'(0)-v'(T)\},$$
a {\sl solution} of \eqref{pb1} is defined as being a function $u:[0,T]\to(0,\infty)$ with $u\in C_T^1$ and  $\phi_{p(\cdot )}(u'( \cdot ))\in C^1$, which satisfies the differential equation in \eqref{pb1} everywhere on $[0,T]$.
\medskip

As $g$ is only defined on $(0,\infty)$, the emphasis is on the case where $g$ has a singularity at $0$. When
$$\lim_{x \to 0+} g(x) = + \infty, \; \lim_{x \to + \infty} g(x) = 0,$$
the restoring force $g$ is called {\it attractive}, and when
$$\lim_{x \to 0+} g(x) = - \infty, \; \lim_{x \to + \infty} g(x) = 0,$$
 $g$ is called {\it repulsive}.
\medskip

In the classical case where $p(t) = 2$ for all $t \in [0,T]$, the history of such problems has been given in \cite{[Maw93],[JMANS]}.
The first existence results for the problem \eqref{pb1} when $p(t)\equiv p > 1$ is constant seems to have been given in \cite{[JMANS]}, where it was proved  (with $\overline h$ the mean value of $h \in C$) that, if
 \begin{itemize}
 \item[$(A)$] $\displaystyle \liminf_{x \to 0+}[g(x) - \|h\|_\infty] > 0,\; \limsup_{x \to + \infty}[g(x) - \overline h] < 0,$
 \end{itemize}
or if
 \begin{itemize}
\item[$(R)$] $\displaystyle \limsup_{x \to 0+}[g(x) + \min\{0,\overline h\} < 0, \; \liminf_{x \to +\infty} [g(x) + \overline h] >0$,\\
$\displaystyle \int_0^1 g(x)\,dx = - \infty, \; \limsup_{x \to + \infty} g(x) < +\infty,$
\end{itemize}
the problem
\begin{equation}\label{p}
	\left \{
            \begin{array}{ll}
		      (\phi_{p}(u'))'+f(u)u'+g(u)=h(t)\quad \mbox{ in }[0,T],\\
		      u(0)-u(T)=0=u'(0)-u'(T)
			\end{array}
\right.
\end{equation}
has a positive solution. This easily implies that when
$$\lim_{x \to 0+}g(x) = + \infty, \; \lim_{x \to +\infty} g(x) = 0,$$
the problem \eqref{p} has a positive solution if and only if $\overline h > 0$, while when
 $$\lim_{x \to 0+}g(x) = - \infty, \; \lim_{x \to + \infty} g(x) = 0,\; \int_0^1 g(x)\,dx = - \infty,$$
\eqref{p} has a positive solution if and only if $\overline h < 0$.
\medskip

Further refinements, generalizations and variants are due, in chronological order, to I. Rach\r{u}nkov\'{a}, M. Tvrd\'{y} and I.Vrko\v{c} \cite{[RTV]}, I. Rach\r{u}nkov\'{a} and M. Tvrd\'{y} \cite{[RaT1],[RaT2],[RaT3]}, A. Cabada, A. Lom\-ta\-ti\-dze and M. Tvrd\'y \cite{[CLT]}, Y. Xin, X. Han and Z. Cheng \cite{[XHC]}, Y. Xin and Z. Cheng \cite{[XiC1],[XiC2]}, D. Lan and W. Chen \cite{[LaC]}, Y. Xin and H. Liu \cite{[XiL]}, T. Zhou, B. Du and H. Du \cite{[ZDD]}, F. Zheng \cite{[Zhe]}, Y. Xin and Z. Cheng \cite{[XiC3]}, and Y. Zhu \cite{[Zhu]}.
\medskip

The continuation theorem in \cite{[MaM]} for second order systems involving generalizations of the vector $p$-Laplacian used in \cite{[JMANS]} has been recently extended in \cite{[HMMT]} to a class of mappings $\phi : [0,T] \times \mathbb R^N \to \mathbb R^N$ containing in particular, when $N=1$, the mapping $(t,x) \mapsto \phi_{p(t)}(x)$ which engenders the $p(t)$-Laplacian.
The special case of this extension here employed is recalled as Theorem \ref{thHMMT} in Section \ref{sectiunea2bis}, and used there to prove the existence and bounds for the $T$-periodic solutions of equations of the form
$$ (\phi_{p(t)}(u'))'+\theta(u)u'-\varepsilon u=e(t),$$
where $\varepsilon > 0$ and $e$ has mean value zero (Theorem \ref{thepsilon}). The existence result is also extended to the case where $\varepsilon = 0$ in Theorem \ref{thoper},
 and a variant, needed later to construct some upper solutions, is given in Proposition \ref{lemaf0}.
\medskip

The fundamental existence theorem of the method of lower and upper solutions is stated and proved in \cite{[MaT2]}
under more general regularity assumptions for second order quasilinear differential equations and boundary conditions containing the periodic ones.
Based only on Theorem \ref{thHMMT}, we provide in Section \ref{sectiunea2}, for problem \eqref{pb1}, a direct proof of this method (Theorem \ref{lemalu}). Then, Theorem \ref{lemalu} is used in final Section \ref{final} to prove the complete extension to the $p(t)$-Laplacian case of the main existence result in \cite{[JMANS]} for the case of attractive forces. In this moment, the corresponding extension for the case of repulsive restoring forces remains open.
\medskip

Notice finally that variational methods have been used in those last years to prove the existence of periodic solutions of some perturbations of the $p(t)$-Laplacian by, chronologically, X.J. Wang and R. Yuan \cite{[WaY]}, L. Zhang and Y. Chen \cite{[ZhC]}, L. Zhang and X.H. Tang \cite{[ZhT]}, and C. Liu and Y. Zhong \cite{[LiZ]}. This approach cannot be used for problem \eqref{pb1}, which has not a variational structure. In contrast to the topological approach used here, they require the use of the more sophisticated Sobolev spaces with variable exponents.

\section{Some auxiliary problems}\label{sectiunea2bis}

Below, by $\| \cdot \| _{\infty}$ we denote the sup-norm on $C$, the spaces $C^1$ and $C_T^1$ will be considered with the norm
$\|v\|_1=\|v\|_\infty+\|v'\|_\infty$, while the usual norm on $L^r:=L^r([0,T]; \mathbb{R})$ will be denoted by $\| \cdot \|_{L^r}$ ($1\leq r \leq \infty$). By $B(0,\rho)$ we denote the open ball of radius $\rho>0$ in $C_T^1$ and by $\overline B(0,\rho)$ its closure. Also, we use the notations $$p_-:=\min_{t\in[0,T]} p(t), \qquad p^+:=\max_{t\in[0,T]} p(t).$$

Consider the general periodic problem
\begin{equation}\label{pbell}
	\left \{
            \begin{array}{ll}
		      (\phi_{p(t)}(u'))'=\ell(t,u,u')\quad \mbox{ in }[0,T],\\
		      u(0)-u(T)=0=u'(0)-u'(T),
			\end{array}
    \right.
\end{equation}
where $\ell:[0,T]\times\mathbb{R}^2\to\mathbb{R}$ is a Carath\'{e}odory function; recall this means:
\smallskip

($i$) for a.e. $t\in [0,T]$, the function $l(t, \cdot, \cdot )$ is continuous;
\smallskip

($ii$) for each $x,y \in \mathbb{R}$ the function $l( \cdot,x,y)$ is measurable;
\smallskip

($iii$) for each $\rho >0$ there is some $\alpha_{\rho} \in L^1$ such that
$$|l(t,x,y)| \leq \alpha_{\rho}(t), \; \mbox{ for a.e. }t\in [0,T] \mbox{ and all }x,y \in \mathbb{R} \mbox{ with }|x|, |y| \leq \rho.$$

\begin{definition}\label{def1}\emph{By a {\it solution} of problem \eqref{pbell} we understand a function $u\in C^1_T$ with $\phi_{p(\cdot )}(u'( \cdot ))$ absolutely continuous, which satisfies the differential equation in \eqref{pbell} a.e. on $[0,T]$.
}
\end{definition}

\begin{remark}\label{remark1} \emph{Note that if $\ell:[0,T]\times\mathbb{R}^2\to\mathbb{R}$ is continuous and $u$ is a solution  of \eqref{pbell} in the sense of Definition \ref{def1}, then $\phi_{p(\cdot )}(u'( \cdot ))\in C^1$ and $u$ satisfies the differential equation in \eqref{pbell} everywhere on $[0,T]$ -- this actually means that $u$ is solution in classical sense.}
\end{remark}
\smallskip

Below in this section, we adopt a strategy similar to the one from \cite[Section 6]{[Maw01]} for $p(t)$ a constant $p$. We make use of the following continuation theorem, which is an immediate consequence of \cite[Theorem 4.1]{[HMMT]}.
\begin{theorem}\label{thHMMT}
Let $\Omega$ be an bounded open set in $C_T^1$ such that the following conditions hold.
\begin{enumerate}
  \item[$(H_1)$] For each $\lambda\in(0,1)$ the problem
  \begin{equation}\label{pblambda}
	\left \{
            \begin{array}{ll}
		      (\phi_{p(t)}(u'))'=\lambda\ell(t,u,u')\quad \mbox{ in }[0,T],\\
		      u(0)-u(T)=0=u'(0)-u'(T)
			\end{array}
  \right.
\end{equation}
has no solution on $\partial\Omega$.
  \item[$(H_2)$] The equation
  $$\mathcal{L}(a):=\frac{1}{T}\int_0^T\ell(t,a,0)dt=0$$
  has no solution on $\partial\Omega\cap\mathbb{R}$.
  \item[$(H_3)$] The Brouwer degree
  $$deg_B[\mathcal{L},\Omega\cap\mathbb{R},0]\neq0.$$
\end{enumerate}
Then problem \eqref{pbell} has a solution in $\overline{\Omega}$.
\end{theorem}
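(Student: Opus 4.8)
The statement to be proved is Theorem~\ref{thHMMT}, which the excerpt announces as ``an immediate consequence of \cite[Theorem 4.1]{[HMMT]}.'' So the plan is to deduce it from that continuation theorem by the standard Leray--Schauder/coincidence degree argument adapted to the $\phi_{p(t)}$-setting.

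First I would recast problem~\eqref{pbell} as a fixed point problem. Following the homotopy in \cite[Theorem 4.1]{[HMMT]}, one introduces, for $\lambda\in[0,1]$, the operator equation whose solutions at $\lambda=1$ are exactly the solutions of \eqref{pbell} and whose solutions for $\lambda\in(0,1)$ are exactly the solutions of \eqref{pblambda}, while at $\lambda=0$ the equation reduces to a finite-dimensional problem governed by the averaged map $\mathcal{L}$. Concretely, this uses the substitution $u=a+v$ with $a=\overline u$ the mean value and $v$ of mean value zero, inverting $(\phi_{p(t)}(u'))'$ against the projector onto mean-zero functions, so that the nonlinear term $\ell(t,u,u')$ is handled by a compact Nemytskii-type operator (compactness coming from the Carath\'eodory bound $(iii)$ together with the Arzel\`a--Ascoli theorem applied to $v$ and $\phi_{p(\cdot)}(v')$). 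The boundary conditions $u(0)=u(T)$, $u'(0)=u'(T)$ are built into the choice of the space $C^1_T$ and into the inversion formula exactly as in \cite[Section 6]{[Maw01]}.

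Next I would verify that the three hypotheses $(H_1)$--$(H_3)$ are precisely what is needed to apply \cite[Theorem 4.1]{[HMMT]} on $\Omega$. Hypothesis $(H_1)$ says the homotopy has no fixed point on $\partial\Omega$ for $\lambda\in(0,1)$; hypothesis $(H_2)$ says the limiting ($\lambda=0$) finite-dimensional equation $\mathcal{L}(a)=0$ has no solution on $\partial\Omega\cap\mathbb{R}$, which rules out fixed points on $\partial\Omega$ at $\lambda=0$ as well (a fixed point of the $\lambda=0$ problem is necessarily a constant $a$ with $\mathcal{L}(a)=0$); and $(H_3)$ guarantees that the Brouwer degree of the bifurcation equation is nonzero, which by the reduction property of the degree equals the Leray--Schauder degree of the full operator on $\Omega$. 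Combining these with the homotopy invariance of the degree yields a fixed point in $\overline\Omega$, i.e.\ a solution of \eqref{pbell}. Finally, by Definition~\ref{def1} and Remark~\ref{remark1}, this fixed point is a genuine solution in the stated sense.

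The main technical obstacle is bookkeeping rather than depth: one must check that the operator realizing problem~\eqref{pbell} fits the structural framework of the mappings $\phi$ admitted in \cite[Theorem 4.1]{[HMMT]} (continuity, strict monotonicity and the homeomorphism property of $x\mapsto\phi_{p(t)}(x)$ on $\mathbb{R}$, uniformly enough in $t$), and that the a priori bound needed to confine the homotopy to a ball is exactly encoded in $(H_1)$ so that no separate estimate is required here. Since the paper explicitly states this is an immediate consequence of \cite[Theorem 4.1]{[HMMT]}, the cleanest route is simply to specialize that theorem to $N=1$ and $\phi(t,x)=\phi_{p(t)}(x)$ and observe that its hypotheses translate verbatim into $(H_1)$, $(H_2)$, $(H_3)$; I would present the proof as that verification.
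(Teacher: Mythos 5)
Your proposal takes essentially the same route as the paper, which offers no separate proof of Theorem~\ref{thHMMT} but simply records it as an immediate consequence of \cite[Theorem 4.1]{[HMMT]} specialized to $N=1$ and $\phi(t,x)=\phi_{p(t)}(x)$; your sketch of the underlying fixed-point reformulation and degree argument is consistent with the operator framework the paper itself recalls in Lemma~\ref{thpfix}. No gap to report.
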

\medskip

It will be sometimes useful to write an element $v\in L^1$  as $v(t)=\overline{v}+\tilde{v}(t),$  with
$$\overline{v}:=\frac{1}{T}\int_0^T v(t)dt,\ \  \left ( \mbox{whence } \int_0^T\tilde{v}(t)dt=0 \right ).$$
Recall, according to the Sobolev inequality, that any absolutely continuous function $v$ satisfies
\begin{equation}\label{sobolev}
    \|\tilde{v}\|_\infty\leq\|v'\|_{L^1}.
\end{equation}
\smallskip

We first consider the problem
\begin{equation}\label{pbepsilon}
	\left \{
            \begin{array}{ll}
		      (\phi_{p(t)}(u'))'+\theta(u)u'-\varepsilon u=e(t)\quad \mbox{ in }[0,T],\\
		      u(0)-u(T)=0=u'(0)-u'(T),
			\end{array}
\right.
\end{equation}
where $\varepsilon>0$ is a parameter, $\theta:\mathbb{R}\to\mathbb{R}$ is continuous and $e\in L^1$ is with $\overline{e}=0.$

\begin{theorem}\label{thepsilon}
For any $\varepsilon^*>0$ there exists $R:=R(\varepsilon^*)>0$ such that, for all $\varepsilon\in(0,\varepsilon^*]$, problem \eqref{pbepsilon} has at least one solution in $B(0,R)$ and any solution $u$ of \eqref{pbepsilon} belongs to $B(0,R)$ and satisfies $\overline{u}=0$.
\end{theorem}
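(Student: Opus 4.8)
The plan is to apply the continuation theorem (Theorem \ref{thHMMT}) with
$\ell(t,x,y) = e(t) + \varepsilon x - \theta(x)y$ on a ball $\Omega = B(0,R)$ of $C_T^1$,
so that \eqref{pblambda} becomes
$(\phi_{p(t)}(u'))' + \lambda\theta(u)u' - \lambda\varepsilon u = \lambda e(t)$.
First I would derive a priori bounds uniform in $\varepsilon\in(0,\varepsilon^*]$ and in $\lambda\in(0,1]$ for any solution $u$ of the $\lambda$-problem. Integrating the equation over $[0,T]$ and using the periodicity (so $\int_0^T(\phi_{p(t)}(u'))'\,dt = 0$) together with $\int_0^T\theta(u)u'\,dt = 0$ (this is $\int$ of a derivative of $\Theta(u)$, $\Theta$ a primitive of $\theta$, again killed by periodicity) and $\overline e = 0$, one gets $\varepsilon\int_0^T u\,dt = 0$, hence $\overline u = 0$. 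In particular every solution of \eqref{pbepsilon} itself (the case $\lambda=1$) has mean value zero, which is one of the asserted conclusions.

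Next, with $\overline u = 0$ I would bound $\|u\|_\infty$. Writing $u = \tilde u$, the Sobolev inequality \eqref{sobolev} gives $\|u\|_\infty \le \|u'\|_{L^1}$, so it suffices to bound $\|u'\|_{L^1}$. Multiplying the $\lambda$-equation by $u$ and integrating, the term $\int_0^T \phi_{p(t)}(u')u'\,dt = \int_0^T |u'|^{p(t)}\,dt$ appears after one integration by parts (the boundary terms vanish by periodicity), while $\int_0^T \theta(u)u' u\,dt$ can be handled by introducing a primitive of $s\mapsto s\theta(s)$ and is \emph{not} automatically zero — this is where some care is needed — but it is controlled by $\|u\|_\infty\cdot(\text{something like }\|u'\|_{L^1})$; combined with the $-\lambda\varepsilon\int u^2 \le 0$ term and $|\lambda\int e u| \le \|e\|_{L^1}\|u\|_\infty$, one obtains $\int_0^T|u'|^{p(t)}\,dt \le c_1\|u\|_\infty \le c_1\|u'\|_{L^1}$. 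Then a reverse Hölder / Jensen-type estimate relating $\|u'\|_{L^1}$ to $\left(\int_0^T|u'|^{p(t)}\,dt\right)^{1/p_\pm}$ (splitting $[0,T]$ according to whether $|u'|\le 1$ or $|u'|>1$, where one uses $|u'|\le |u'|^{p(t)}+1$ on the large set) closes the loop and yields $\|u'\|_{L^1}\le R_0$ for a constant depending only on $\varepsilon^*$, $T$, $\theta$, $e$, $p_-$, $p^+$. Finally $\|u'\|_\infty$ is bounded: from $(\phi_{p(t)}(u'))' = \lambda(e + \varepsilon u - \theta(u)u')$ one bounds $\|(\phi_{p(t)}(u'))'\|_{L^1}$ using the already-established bounds and the fact that $\theta$ is continuous hence bounded on the compact range of $u$; since $\phi_{p(t)}(u')$ vanishes at the point where $u'$ does (periodicity forces a zero of $u'$), integrating gives $\|\phi_{p(\cdot)}(u'(\cdot))\|_\infty \le \|(\phi_{p(t)}(u'))'\|_{L^1}$, and inverting $\phi_{p(t)}$ — using $|x| = |\phi_{p(t)}(x)|^{1/(p(t)-1)}$ together with $p_- > 1$ — gives a bound on $\|u'\|_\infty$. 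Set $R$ larger than $R_0 + (\text{this bound})$; then $(H_1)$ holds.

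For $(H_2)$ and $(H_3)$: here $\mathcal L(a) = \frac1T\int_0^T \ell(t,a,0)\,dt = \varepsilon a$ (the $\theta(a)\cdot 0$ term drops, $\overline e = 0$). Since $\varepsilon > 0$, $\mathcal L(a) = 0$ only at $a = 0 \in \Omega\cap\mathbb R$, so $\mathcal L$ has no zero on $\partial\Omega\cap\mathbb R = \{-R,R\}$, giving $(H_2)$; and $\deg_B[\mathcal L, (-R,R), 0] = \mathrm{sign}(\varepsilon) = 1 \ne 0$, giving $(H_3)$. Theorem \ref{thHMMT} then yields a solution in $\overline{B(0,R)}$; since all solutions satisfy the a priori bounds \emph{strictly} inside (choose $R$ strictly larger), the solution lies in $B(0,R)$, and by the a priori argument any solution of \eqref{pbepsilon} lies in $B(0,R)$ and has $\overline u = 0$. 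The main obstacle I anticipate is the careful treatment of the $\int_0^T\theta(u)u'u\,dt$ term and, more importantly, extracting the $\|u'\|_{L^1}$ bound from the variable-exponent energy $\int_0^T|u'|^{p(t)}\,dt$ — the variable exponent prevents a one-line Hölder argument and forces the set-splitting estimate described above, with constants depending on $p_-, p^+$.
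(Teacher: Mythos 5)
Your overall strategy is exactly the paper's: the same $\ell(t,x,y)=e(t)+\varepsilon x-\theta(x)y$, the same integration to get $\overline u=0$, the same energy estimate followed by the split over $[|u'|\le 1]$ and $[|u'|>1]$ plus H\"older to bound $\|u'\|_{L^1}$, the same Rolle-plus-inversion of $\phi_{p(t)}$ for $\|u'\|_\infty$ (just remember to normalize the bound on $|\phi_{p(t)}(u')|$ to be at least $1$ so that $R_3^{1/(p(t)-1)}\le R_3^{1/(p_--1)}$ is legitimate), and the same computation $\mathcal L(a)=\varepsilon a$ for $(H_2)$--$(H_3)$.

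There is, however, one genuine gap: your treatment of $\int_0^T\theta(u)u'u\,dt$. You assert it is ``not automatically zero'' and propose to control it by $\|u\|_\infty\cdot\|u'\|_{L^1}$. That fallback fails twice over. First, it is circular: bounding $\theta$ on the range of $u$ presupposes the very bound on $\|u\|_\infty$ you are trying to establish. Second, even granting a bound $|\theta|\le C$, you would get $\int_0^T|u'|^{p(t)}\,dt\le C\|u\|_\infty\|u'\|_{L^1}+\dots\le C\|u'\|_{L^1}^2$, and a quadratic right-hand side cannot be absorbed by $\|u'\|_{L^{p_-}}^{p_-}$ unless $p_->2$; the argument would collapse for $1<p_-\le 2$. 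The correct observation --- the one the paper builds in from the start by writing $\theta(u)u'=(\Theta'(u))'$ with $\Theta(x)=\int_0^x\int_0^s\theta(\sigma)\,d\sigma\,ds$ --- is that the term vanishes identically: if $G$ is a primitive of $s\mapsto s\,\theta(s)$ (the very primitive you mention), then $u\,\theta(u)\,u'=(G(u))'$, so
\begin{equation*}
\int_0^T \theta(u(t))\,u'(t)\,u(t)\,dt=G(u(T))-G(u(0))=0
\end{equation*}
by periodicity. With that correction the energy identity reads $\int_0^T|u'|^{p(t)}dt+\lambda\varepsilon\int_0^T u^2dt=-\lambda\int_0^T\tilde e\,u\,dt\le\|\tilde e\|_{L^1}\|u'\|_{L^1}$, and the rest of your plan goes through exactly as in the paper.
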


\noindent\begin{proof} Let $\varepsilon^*>0$ be fixed and $\varepsilon\in(0,\varepsilon^*]$ be arbitrarily chosen. Set $$\Theta(x):=\int_0^x\int_0^s\theta(\sigma)d\sigma ds\quad (x\in \mathbb R)$$ and note that, if $w\in C^1$, then $(\Theta'(w))'=\theta(w)\, w'$. We apply Theorem \ref{thHMMT} with $\ell(t,x,y)=e(t)+\varepsilon x-\theta(x)y$. In this view, let us consider the family of problems
\begin{equation}\label{pbepsilon1}
	\left \{
            \begin{array}{ll}
		      (\phi_{p(t)}(u'))'+\lambda(\Theta'(u))'-\lambda\varepsilon u=\lambda e(t)\quad \mbox{ in }[0,T],\\
		      u(0)-u(T)=0=u'(0)-u'(T)
			\end{array}
\right.
\end{equation}
and let $u(t)=\overline{u}+\tilde{u}(t)$ be a solution of \eqref{pbepsilon1} for some $\lambda\in(0,1]$. Integrating
both members of the equation in \eqref{pbepsilon1} over $[0,T]$, taking into account the periodic boundary conditions, together with $p(0)=p(T)$ and $\overline{e}=0$, it follows $\overline{u}=0$.

Multiplying by $u$ the equation in \eqref{pbepsilon1} and integrating by parts over $[0,T]$, we obtain
$$-\int_0^T \phi_{p(t)}(u'(t))u'(t)dt-\lambda\int_0^T\Theta'(u(t))u'(t)dt-\lambda\varepsilon\int_0^T u^2(t)dt=\lambda\int_0^Te(t)u(t)dt,$$
or,
$$\int_0^T |u'(t)|^{p(t)}dt+\lambda\int_0^T\left(\Theta(u(t))\right)'dt+\lambda\varepsilon\int_0^T u^2(t)dt=-\lambda\int_0^T(\overline{e}+\tilde{e}(t))u(t)dt,$$
which gives
$$\int_0^T |u'(t)|^{p(t)}dt+\lambda\varepsilon\int_0^T u^2(t)dt=-\lambda\int_0^T\tilde{e}(t)u(t)dt\leq\int_0^T|\tilde{e}(t)u(t)|dt\leq\|\tilde{e}\|_{L^1}\|u\|_\infty.$$
Hence, using Sobolev inequality \eqref{sobolev}, we derive
\begin{equation*}\label{estim}
    \int_0^T |u'(t)|^{p(t)}dt\leq\|\tilde{e}\|_{L^1}\|u'\|_{L^1},
\end{equation*}
which yields
\begin{equation}\label{estim1}
\int\limits_{[|u'(t)|>1]}|u'(t)|^{p_-}dt \leq \int\limits_{[|u'(t)|>1]}|u'(t)|^{p(t)}dt \leq\|\tilde{e}\|_{L^1}\|u'\|_{L^1}.
\end{equation}
On the other hand, as
\begin{eqnarray*}
\int\limits_{[|u'(t)|\leq 1]}|u'(t)|^{p_-}dt \leq \int\limits_{[|u'(t)|\leq 1]}dt  \leq T
\end{eqnarray*}
and using \eqref{estim1} and H\"older inequality, we obtain
\begin{eqnarray*}
\int_0^T |u'(t)|^{p_-}dt &\leq& T + \|\tilde{e}\|_{L^1}\|u'\|_{L^1} \\
&\leq&  T + \|\tilde{e}\|_{L^1}T^{\frac{p_- - 1}{p_-}}\left(\int_0^T |u'(t)|^{p_-}dt \right)^{\frac{1}{p_-}}.
\end{eqnarray*}
Consequently,
 \begin{equation}\label{estim2}
T^{\frac{p_-}{p_- - 1}}\|u'\|_{L^1} \leq \|u'\|_{L^{p_-}} \leq R_1,
\end{equation}
where $R_1 := R_1(T,\tilde e,p_-)$ is the (positive) solution of the equation
$$x^{p_-} - \|\tilde{e}\|_{L^1}T^{\frac{p_- - 1}{p_-}}x - T = 0.$$
From \eqref{sobolev} and \eqref{estim2}, one has that
\begin{equation}\label{jestu}
\|u\|_\infty\leq R_2,
\end{equation}
with $R_2 := T^{\frac{p_- -1}{p_-}}R_1$.

Next, as $u$ is a solution of \eqref{pbepsilon1}, and since Rolle theorem implies the existence of  $\tau \in (0,T)$ such that $u'(\tau) = 0$, we have, for all $t \in [0,T]$,
\begin{eqnarray*}
|\phi_{p(t)}(u'(t))| &=& \lambda \left |\int_\tau^t  [e(s) - (\Theta'(u(s)))' + \varepsilon u(s)]ds \right| \\
&\leq& \|e \|_{L_1} + R_2 \max_{|v| \leq R_2}|\theta(v)| + \varepsilon^* R_2 T\leq  R_3,
\end{eqnarray*}
where
$$R_3 := R_3(\varepsilon^*) = \max\{\|e\|_{L_1} + R_2 \max_{|v| \leq R_2}|\theta(v)| + \varepsilon^* R_2T, 1\}.$$
Hence, for all $t \in [0,T]$,
\begin{equation}\label{jestudot}
|u'(t)| \leq R_3^{\frac{1}{p(t)-1}} \leq R_3^{\frac{1}{p_- - 1}} := R_4(\varepsilon^*).
\end{equation}
This, together with \eqref{jestu}, shows that $\|u\|_1<R:=R_2+R_4(\varepsilon^*)+1$ and condition $(H_1)$ in Theorem \ref{thHMMT} is fulfilled with $\Omega:=B(0,R)$.

To check the remaining conditions of Theorem \ref{thHMMT}, we see that
$$\mathcal{L}(a):=\frac{1}{T}\int_0^T(e(t)+\varepsilon a)dt=\varepsilon a$$
and $a=0$ is the unique solution of $\mathcal{L}(a)=0$. As $\partial \Omega \cap \mathbb{R}=\{ -R, R \}$, clearly $(H_2)$  is satisfied. Also,  $deg_B[\mathcal{L},\Omega\cap\mathbb{R},0]=1$, hence $(H_3)$ holds true.
The proof is complete.
\end{proof}

\begin{remark}\label{remarkDir} \emph{For a result in line with Theorem \ref{thepsilon}, but for a Li\'{e}nard system with Dirichlet boundary conditions we refer the reader to \cite[Theorem 5.1]{[HMMT1]}.}
\end{remark}

Let $\ell:[0,T]\times\mathbb{R}^2\to\mathbb{R}$ be a Carath\'{e}odory function. We denote by $N_\ell:C_T^1\to L^1$ the Nemytskii operator
associated to $\ell$; recall, this is
$$N_\ell(v)(t)=\ell(t,v(t),v'(t)), \quad (v\in C_T^1, \, t\in [0,T])$$
and it is known that $N_\ell$ is continuous and sends bounded sets from $C_T^1$  into equi-integrable sets in $L^1$. Also, defining
$$P:C_T^1\to C_T^1,\ \ v\mapsto v(0) \, \mbox{ and }\, Q:L^1\to L^1,\ \ h\mapsto\frac{1}{T}\int_0^T h(t)dt,$$
we have that $P$ is completely continuous and $Q$ is continuous and maps equi-integrable sets in $L^1$ into relatively compact sets in $C_T^1$.
\medskip

\begin{lemma}\label{thpfix} If $\ell:[0,T]\times\mathbb{R}^2\to\mathbb{R}$ is Carath\'{e}odory, then
there exists a continuous operator $\mathcal{K}:L^1\to C_T^1$ that sends equi-integrable sets in $L^1$ into relatively compact sets in $C_T^1$, such that $u\in C^1_T$ is a solution of problem \eqref{pbell} if and only if it is a fixed point of the completely continuous mapping
$$\mathcal{G}_\ell:C_T^1\to C_T^1,\ \ v\mapsto Pv+QN_\ell(v)+\mathcal{K}N_\ell(v).$$
\end{lemma}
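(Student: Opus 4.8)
The plan is to construct $\mathcal{K}$ by inverting explicitly the operator $u\mapsto(\phi_{p(t)}(u'))'$ subject to the periodic conditions, in the spirit of the Man\'{a}sevich--Mawhin reduction but with the variable exponent handled pointwise in $t$. Write $q(t):=p(t)/(p(t)-1)$, so that $\phi_{p(t)}$ admits the inverse $\phi_{p(t)}^{-1}(y)=|y|^{q(t)-2}y$; since $p$ is continuous with $p(t)>1$, the map $(t,y)\mapsto\phi_{p(t)}^{-1}(y)$ is continuous on $[0,T]\times\mathbb{R}$ and $\phi_{p(t)}^{-1}(\cdot)$ is a strictly increasing bijection of $\mathbb{R}$ for each fixed $t$. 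For $h\in L^1$ set $\widetilde h:=h-Qh$ and $\widetilde H(t):=\int_0^t\widetilde h(s)\,ds$, so $\widetilde H(0)=\widetilde H(T)=0$, and define
\[
\Psi_h:\mathbb{R}\to\mathbb{R},\qquad \Psi_h(a):=\int_0^T\phi_{p(t)}^{-1}\big(a+\widetilde H(t)\big)\,dt .
\]
First I would check that $\Psi_h$ is a continuous, strictly increasing bijection of $\mathbb{R}$ onto $\mathbb{R}$: monotonicity and continuity are inherited from $\phi_{p(t)}^{-1}$, while boundedness of $\widetilde H$ gives $\Psi_h(a)\to\pm\infty$ as $a\to\pm\infty$. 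Hence $\Psi_h(a)=0$ has a unique root $a(h)\in\mathbb{R}$, and one sets
\[
(\mathcal{K}h)(t):=\int_0^t\phi_{p(s)}^{-1}\big(a(h)+\widetilde H(s)\big)\,ds\qquad(t\in[0,T]).
\]
By construction $(\mathcal{K}h)(0)=0$ and $(\mathcal{K}h)(T)=\Psi_h(a(h))=0$, while $(\mathcal{K}h)'(t)=\phi_{p(t)}^{-1}(a(h)+\widetilde H(t))$ is continuous with $(\mathcal{K}h)'(0)=\phi_{p(0)}^{-1}(a(h))=\phi_{p(T)}^{-1}(a(h))=(\mathcal{K}h)'(T)$ because $p(0)=p(T)$; thus $\mathcal{K}h\in C_T^1$. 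Moreover $\phi_{p(t)}((\mathcal{K}h)'(t))=a(h)+\widetilde H(t)$ is absolutely continuous with a.e. derivative $\widetilde h=h-Qh$, i.e. $(\phi_{p(t)}((\mathcal{K}h)'))'=h-Qh$.

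Next I would prove the equivalence. If $u$ solves \eqref{pbell}, integrating the equation over $[0,T]$ and using $p(0)=p(T)$ together with the periodic boundary conditions gives $\int_0^TN_\ell(u)=0$, that is $QN_\ell(u)=0$; with $h:=N_\ell(u)$ one then has $\widetilde h=h$, $\phi_{p(t)}(u'(t))=\phi_{p(0)}(u'(0))+\widetilde H(t)$, and $\int_0^Tu'=u(T)-u(0)=0$, so $\phi_{p(0)}(u'(0))$ is a root of $\Psi_h$ and hence equals $a(h)$; consequently $u'=(\mathcal{K}h)'$ and $u-u(0)=\mathcal{K}N_\ell(u)$, which reads $u=Pu+QN_\ell(u)+\mathcal{K}N_\ell(u)=\mathcal{G}_\ell(u)$. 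Conversely, if $u=\mathcal{G}_\ell(u)$, evaluating at $t=0$ and using $(\mathcal{K}N_\ell(u))(0)=0$ forces $QN_\ell(u)=0$; then $u=u(0)+\mathcal{K}N_\ell(u)$, so $u\in C_T^1$, $\phi_{p(\cdot)}(u')$ is absolutely continuous, and $(\phi_{p(t)}(u'))'=(\phi_{p(t)}((\mathcal{K}N_\ell(u))'))'=N_\ell(u)-QN_\ell(u)=\ell(t,u,u')$ a.e., so $u$ solves \eqref{pbell}.

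It remains to establish the regularity of $\mathcal{K}$ and of $\mathcal{G}_\ell$. Continuity of $\mathcal{K}:L^1\to C_T^1$ reduces to continuity of $h\mapsto a(h)$: if $h_n\to h$ in $L^1$ then $\widetilde H_n\to\widetilde H$ uniformly (since $\|\widetilde h_n-\widetilde h\|_{L^1}\le 2\|h_n-h\|_{L^1}$), the roots $a(h_n)$ stay bounded (as $\Psi_{h_n}(a(h_n))=0$ with the $\widetilde H_n$ uniformly bounded), and a standard subsequence argument using the continuity of $\phi_{p(t)}^{-1}$ and the strict monotonicity of the limiting $\Psi_h$ yields $a(h_n)\to a(h)$; uniform continuity of $\phi_{p(\cdot)}^{-1}(\cdot)$ on compact subsets of $[0,T]\times\mathbb{R}$ then gives $(\mathcal{K}h_n)'\to(\mathcal{K}h)'$ and $\mathcal{K}h_n\to\mathcal{K}h$ uniformly. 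If $\mathcal{F}\subset L^1$ is equi-integrable, then $\{\widetilde H:h\in\mathcal{F}\}$ is bounded and equicontinuous in $C([0,T])$, hence $\{a(h):h\in\mathcal{F}\}$ is bounded, and therefore $\{(\mathcal{K}h)':h\in\mathcal{F}\}$ is bounded and equicontinuous while $\{\mathcal{K}h:h\in\mathcal{F}\}$ is bounded and uniformly Lipschitz; by the Arzel\`{a}--Ascoli theorem applied to $\mathcal{K}h$ and to $(\mathcal{K}h)'$, the set $\mathcal{K}(\mathcal{F})$ is relatively compact in $C_T^1$. Combining this with the recalled properties of $N_\ell$, $P$ and $Q$ shows that $\mathcal{G}_\ell=P+QN_\ell+\mathcal{K}N_\ell$ is continuous and maps bounded subsets of $C_T^1$ into relatively compact ones, i.e. is completely continuous.

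The main obstacle is the analysis of the scalar equation $\Psi_h(a)=0$: one must verify that it determines a single-valued map $a(\cdot)$, that this map is continuous from $L^1$ to $\mathbb{R}$, and that it carries equi-integrable families into bounded subsets of $\mathbb{R}$. Once this is in place, both the equivalence and the two Arzel\`{a}--Ascoli arguments are routine, as is the final deduction of complete continuity of $\mathcal{G}_\ell$ from the stated properties of $N_\ell$, $P$ and $Q$.
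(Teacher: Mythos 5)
Your construction is correct and is essentially a self-contained, scalar version of the argument the paper delegates entirely to \cite[Section 3]{[HMMT]}: the decomposition $Pv+QN_\ell(v)+\mathcal{K}N_\ell(v)$ with $\mathcal{K}h$ built from the unique root $a(h)$ of $\Psi_h$ is exactly the Man\'asevich--Mawhin reduction, and in dimension one the strict monotonicity of $\Psi_h$ makes the root-finding elementary (whereas the cited systems setting needs a fixed-point/degree argument for the vector parameter $a$). All the steps you flag as routine --- well-definedness and continuity of $a(\cdot)$, boundedness of $a(\mathcal{F})$ for equi-integrable $\mathcal{F}$ (dominated by a fixed $\eta\in L^1$, as the paper uses the term), the two Arzel\`a--Ascoli applications, and the equivalence with fixed points using $p(0)=p(T)$ --- check out.
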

\begin{proof} See Section 3 in \cite{[HMMT]}.
\end{proof}

The following result extends the existence part of Theorem \ref{thepsilon} to the case where $\varepsilon = 0$.

\begin{theorem}\label{thoper}
If $\theta:\mathbb{R}\to\mathbb{R}$ is continuous and $e\in L^1$ satisfies $\overline{e}=0$, then problem
\begin{equation}\label{pboper}
	\left \{
            \begin{array}{ll}
		      (\phi_{p(t)}(u'))'+\theta(u)u'=e(t)\quad \mbox{ in }[0,T],\\
		      u(0)-u(T)=0=u'(0)-u'(T)
			\end{array}
\right.
\end{equation}
has at least one solution $u$ with $\overline{u}=0$.
\end{theorem}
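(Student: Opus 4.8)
The natural strategy is to obtain the $\varepsilon=0$ solution as a limit of the solutions $u_\varepsilon$ provided by Theorem~\ref{thepsilon} as $\varepsilon\to 0+$. Fix any $\varepsilon^*>0$; Theorem~\ref{thepsilon} then yields a radius $R=R(\varepsilon^*)$, \emph{uniform in} $\varepsilon\in(0,\varepsilon^*]$, and for each such $\varepsilon$ a solution $u_\varepsilon$ of \eqref{pbepsilon} with $u_\varepsilon\in B(0,R)$ and $\overline{u_\varepsilon}=0$. Pick a sequence $\varepsilon_n\to 0+$ and set $u_n:=u_{\varepsilon_n}$. The bounds $\|u_n\|_1<R$ give a uniform $C^1$ bound; to pass to a limit in $C^1_T$ I would use the characterization of solutions as fixed points of $\mathcal G_{\ell_n}$ from Lemma~\ref{thpfix}, where $\ell_n(t,x,y)=e(t)+\varepsilon_n x-\theta(x)y$, or equivalently apply $\mathcal K$ directly: since $\{u_n\}$ is bounded in $C^1_T$, the functions $t\mapsto e(t)+\varepsilon_n u_n(t)-\theta(u_n(t))u_n'(t)$ form an equi-integrable family in $L^1$, so $\mathcal K$ (and $Q$, and $P$) map them into a relatively compact set; hence, along a subsequence, $u_n\to u$ in $C^1_T$.

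It remains to check that $u$ solves \eqref{pboper} with $\overline u=0$. The condition $\overline u=0$ is immediate from $\overline{u_n}=0$ and uniform convergence. For the equation, I would write each $u_n$ in integrated form: there is $\tau_n\in(0,T)$ with $u_n'(\tau_n)=0$ (Rolle), and
$$\phi_{p(t)}(u_n'(t))=\lambda\text{-free here, so }\phi_{p(t)}(u_n'(t))=\int_{\tau_n}^{t}\bigl[e(s)-\theta(u_n(s))u_n'(s)+\varepsilon_n u_n(s)\bigr]\,ds .$$
Passing to a further subsequence so that $\tau_n\to\tau$, the right-hand side converges uniformly in $t$ (using $u_n\to u$ in $C^1_T$, continuity of $\theta$, the uniform bounds, and $\varepsilon_n\to 0$) to $\int_\tau^t[e(s)-\theta(u(s))u'(s)]\,ds$; the left-hand side converges to $\phi_{p(t)}(u'(t))$ pointwise since $\phi_{p(t)}$ depends continuously on $(t,x)$. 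Thus $\phi_{p(\cdot)}(u'(\cdot))$ equals an absolutely continuous function, and differentiating gives $(\phi_{p(t)}(u'))'+\theta(u)u'=e(t)$ a.e.; the periodic boundary conditions survive the limit because each $u_n\in C^1_T$ and convergence is in $C^1_T$.

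The main obstacle is the passage to the limit in the term $(\phi_{p(t)}(u_n'))'$: one cannot differentiate term-by-term directly, which is exactly why I would pre-integrate the equation and work with $\phi_{p(t)}(u_n'(t))$ itself rather than its derivative—this is the standard device (and is the same mechanism underlying Lemma~\ref{thpfix}). A minor technical point is handling the moving base point $\tau_n$, resolved by compactness of $[0,T]$; another is ensuring equi-integrability of the Nemytskii images, which is guaranteed by the uniform $C^1_T$-bound together with the stated properties of $N_\ell$ and $\mathcal K$. No new a priori estimate is needed—everything rides on the uniformity in $\varepsilon$ already built into Theorem~\ref{thepsilon}.
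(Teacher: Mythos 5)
Your proposal is correct and follows essentially the same route as the paper: the authors take $\varepsilon_n=1/n$, invoke Theorem~\ref{thepsilon} with $\varepsilon^*=1$ for the uniform bound $R$ and the zero-mean solutions $u_n$, use the equi-integrability of $\bigcup_n N_{\ell_n}(\overline B(0,R))$ together with the properties of $P$, $Q$ and $\mathcal K$ from Lemma~\ref{thpfix} to extract a $C^1_T$-convergent subsequence, and pass to the limit in the fixed-point equation $u_{n_k}=\mathcal G_{\ell_{n_k}}(u_{n_k})$. Your alternative of passing to the limit in the integrated form $\phi_{p(t)}(u_n'(t))=\int_{\tau_n}^t[\cdots]\,ds$ is just the same mechanism written out by hand, as you yourself note, so there is no substantive difference.
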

\begin{proof}
Let $\ell_\infty, \, \ell_n:[0,T]\times\mathbb{R}^2\to\mathbb{R}$ be defined by
$$\ell_\infty(t, x, y)=e(t)-\theta(x)y,\quad \ell_n=\ell_\infty(t, x, y)+\frac{1}{n}x,\quad (n\in\mathbb{N},\ (t,x,y)\in[0,T]\times\mathbb{R}^2).$$
From Theorem \ref{thepsilon} there exists $R=R(1)$ such that, for each $n\in\mathbb{N}$, problem
\begin{equation*}\label{pbelln}
	\left \{
            \begin{array}{ll}
		      (\phi_{p(t)}(u'))'=\ell_n(t,u,u')\quad \mbox{ in }[0,T],\\
		      u(0)-u(T)=0=u'(0)-u'(T)
			\end{array}
\right.
\end{equation*}
has at least one solution $u_n$ in $\overline{B}(0,R)$ with $\overline{u}_n=0$. For such a solution $u_n$, by Lemma \ref{thpfix}, one has
\begin{equation}\label{unG}
    u_n=\mathcal{G}_{\ell_n}(u_n)\in\mathcal{G}_{\ell_n}(\overline{B}(0,R))\ \ (n\in\mathbb{N}).
\end{equation}

Since $N_{\ell_\infty}(\overline{B}(0,R))$ is an equi-integrable set in $L^1$, there exists $\eta\in L^1$ such that $\left|N_{\ell_\infty}(v)\right|\leq\eta$ for all $v\in \overline{B}(0,R)$. It follows that $\left|N_{\ell_n}(v)\right|\leq\eta+R,$ for all $n\in\mathbb{N}$ and $v\in \overline{B}(0,R)$, which implies that the set
$$\mathcal{E}:=\bigcup_{n\in\mathbb{N}}N_{\ell_n}(\overline{B}(0,R))$$
is equi-integrable in $L^1$. Then, using
\begin{eqnarray*}
u_n \in \mathcal{G}_{\ell_n}(\overline{B}(0,R)) &\subset& P(\overline{B}(0,R))+ Q N_{\ell_n}(\overline{B}(0,R)) +\mathcal{K}N_{\ell_n}(\overline{B}(0,R))\\
&\subset& P(\overline{B}(0,R))+Q(\mathcal{E})+\mathcal{K}(\mathcal{E})=:\widehat{\mathcal{E}}, \quad (n\in \mathbb{N})
\end{eqnarray*}
and that $\widehat{\mathcal{E}}$ is relatively compact in $C^1_T$, we get the existence of a subsequence $\{u_{n_k}\}\subset\{u_n\}$ and an $u\in C_T^1$ so that $u_{n_k}\to u$ in $C_T^1$, with $k\to\infty$.
As
$$N_{\ell_{n_k}}(u_{n_k})=N_{\ell_\infty}(u_{n_k})+\frac{1}{n_k}u_{n_k},$$
we infer that
$$N_{\ell_{n_k}}(u_{n_k}) \to N_{\ell_\infty}(u), \; \mbox{ as } k \to \infty.$$
Then, from
$$u_{n_k}=\mathcal{G}_{\ell_{n_k}}(u_{n_k})=Pu_{n_k}+Q\left(N_{\ell_{n_k}}(u_{n_k})\right)+\mathcal{K}\left(N_{\ell_{n_k}}(u_{n_k})\right), \quad (k\in \mathbb{N}),$$
it follows
$$u=Pu+QN_{\ell_\infty}(u)+\mathcal{K}N_{\ell_\infty}(u)=\mathcal{G}_{\ell_\infty}(u),$$
i.e. $u$ is a fixed point of $\mathcal{G}_{\ell_\infty}$, meaning that  $u$ is a solution for problem \eqref{pboper} -- by Lemma \ref{thpfix}. The fact that $\overline{u}=0$ is straightforward using that $\overline{u}_{n_k}=0$, ($k\in \mathbb{N}$),  and the proof is complete.
\end{proof}

\begin{proposition}\label{lemaf0}
Let $e\in L^1$ be with $\overline{e}=0$. Then, for any $\theta:\mathbb{R}\to\mathbb{R}$ continuous and $c\in\mathbb{R}$, problem
\begin{equation}\label{pbc}
	\left \{
            \begin{array}{ll}
		      (\phi_{p(t)}(u'))'+\theta(c+u)u'=e(t)\quad \mbox{ in }[0,T],\\
		      u(0)-u(T)=0=u'(0)-u'(T)
			\end{array}
\right.
\end{equation}
has at least one solution $u$ with $\overline{u}=0$ and, for such a solution $u$, it holds
\begin{equation}\label{inegK}
    \|u\|_\infty\leq K,
\end{equation}
where $K=K(T,p_-,e)$ is a constant independent of $\theta$, $c$ and $u$.
\end{proposition}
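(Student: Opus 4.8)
The plan is to obtain the existence statement directly from Theorem \ref{thoper}, and the a priori bound \eqref{inegK} by adapting the energy estimate used in the proof of Theorem \ref{thepsilon}.

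\textbf{Existence.} Since $\theta$ is continuous, so is the function $\tilde\theta:\mathbb R\to\mathbb R$, $\tilde\theta(x):=\theta(c+x)$. Applying Theorem \ref{thoper} with $\tilde\theta$ in place of $\theta$ provides a solution $u$ of \eqref{pbc} with $\overline u=0$.

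\textbf{A priori bound.} Let $u$ be any solution of \eqref{pbc} with $\overline u=0$ (so $u=\tilde u$). Multiplying the equation in \eqref{pbc} by $u$ and integrating by parts over $[0,T]$, and using the periodic boundary conditions together with $p(0)=p(T)$, the leading term contributes $-\int_0^T|u'(t)|^{p(t)}\,dt$. For the friction term, I would introduce $G(x):=\int_0^x\sigma\,\theta(c+\sigma)\,d\sigma$; then $(G(u))'=\theta(c+u)\,u\,u'$, so that $\int_0^T\theta(c+u(t))u(t)u'(t)\,dt=G(u(T))-G(u(0))=0$, again by periodicity. It is precisely this cancellation --- valid whatever $c$ is --- that will make the final constant independent of $\theta$ and $c$. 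We are thus left with
$$\int_0^T|u'(t)|^{p(t)}\,dt=-\int_0^T e(t)u(t)\,dt\le\|e\|_{L^1}\|u\|_\infty\le\|e\|_{L^1}\|u'\|_{L^1},$$
where the last step uses $u=\tilde u$ and the Sobolev inequality \eqref{sobolev}.

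\textbf{Conclusion.} From $\int_0^T|u'|^{p(t)}\,dt\le\|e\|_{L^1}\|u'\|_{L^1}$ I would repeat verbatim the estimates \eqref{estim1}--\eqref{estim2} in the proof of Theorem \ref{thepsilon} --- split $[0,T]$ according to whether $|u'(t)|>1$ or $|u'(t)|\le1$, then apply H\"older's inequality --- to obtain $\|u'\|_{L^{p_-}}\le R_1$, with $R_1=R_1(T,e,p_-)>0$ the root of $x^{p_-}-\|e\|_{L^1}T^{(p_- -1)/p_-}x-T=0$. Then H\"older's inequality and \eqref{sobolev} yield $\|u\|_\infty\le T^{(p_- -1)/p_-}R_1=:K=K(T,p_-,e)$, which is independent of $\theta$, $c$ and $u$, as required. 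No genuine difficulty arises here; the one point deserving attention is the vanishing of $\int_0^T\theta(c+u)uu'\,dt$ by periodicity, holding for every value of $c$, since this is exactly what gives the uniformity of $K$ in $\theta$ and $c$.
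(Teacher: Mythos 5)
Your proof is correct and follows essentially the same route as the paper: existence via Theorem \ref{thoper} applied to $x\mapsto\theta(c+x)$, and the uniform bound via multiplying the equation by $u$ and exploiting the periodicity-induced cancellation $\int_0^T\theta(c+u)\,u\,u'\,dt=0$. The only (harmless) difference is in the last step: you close the estimate with the Sobolev inequality \eqref{sobolev} and H\"older exactly as in the proof of Theorem \ref{thepsilon}, whereas the paper uses the Poincar\'e--Wirtinger inequality and the embedding $W^{1,p_-}\subset C$; both yield a constant $K$ depending only on $T$, $p_-$ and $e$.
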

\begin{proof}
The existence of a solution $u$ for \eqref{pbc} with $\overline{u}=0$ follows from Theorem \ref{thoper}. To prove the existence of $K$ in \eqref{inegK}, let $c_0=c_0(T,p_-)$ be a constant so that
\begin{equation}\label{inegc0}
  \int_0^T|v(t)|^{p_-}\ dt\leq c_0 \int_0^T|v'(t)|^{p_-}\ dt,\quad (v\in W^{1,p_-}, \, \overline{v}=0);
\end{equation}
such a constant is known to exist by Poincar\'{e}-Wirtinger inequality. Here, we have denoted by $W^{1,p_-}$ the Sobolev space $W^{1,p_-}([0,T], \mathbb{R})$, which is considered with its usual norm $\|v\|_{ W^{1,p_-}}=\left ( \|v\|_{ L^{p_-}}^{p_-}+\|v'\|_{ L^{p_-}}^{p_-}\right )^{1/p_{-}}$.  By the continuity of the embedding $W^{1,p_-}\subset C$, there is another constant $c_1=c_1(T,p_-)$ so that
\begin{equation}\label{inegc1}
    \|v\|_\infty\leq c_1 \|v\|_{ W^{1,p_-}}, \quad (v\in W^{1,p_-}).
\end{equation}

Multiplying the equation in \eqref{pbc} by $u$ and integrating over $[0, T]$, we deduce
$$-\int_0^T|u'(t)|^{p(t)}dt=\int_0^T e(t)u(t)dt,$$
whence
$$\int_0^T|u'(t)|^{p(t)}dt\leq\|e\|_{L^1}\|u\|_\infty.$$
Using this, we estimate
\begin{eqnarray}\label{inequal}
  \int_0^T|u'(t)|^{p_-}\ dt&=&\int\limits_{[|u'(t)|\leq 1]}|u'(t)|^{p_-}\ dt+\int\limits_{[|u'(t)|>1]}|u'(t)|^{p_-}\ dt\nonumber\\
  &\leq& T+\int_0^T|u'(t)|^{p(t)}dt\leq T+\|e\|_{L^1}\|u\|_\infty.
\end{eqnarray}
Then, from \eqref{inegc0} with $v=u$, and \eqref{inequal}, it follows
\begin{equation}\label{normw1p}
    \|u\|_{ W^{1,p_-}}^{p_-}=\int_0^T|u(t)|^{p_-}\ dt+\int_0^T|u'(t)|^{p_-}\ dt \leq(c_0+1)(T+\|e\|_{L^1}\|u\|_\infty)
\end{equation}
and from \eqref{inegc1} with $v=u$ and \eqref{normw1p}, we obtain
$$\|u\|_\infty^{p_-}\leq c_1^{p_-}(c_0+1)(T+\|e\|_{L^1}\|u\|_\infty),$$
which implies the existence of a constant $K=K(T,p_-,e)>0$ such that \eqref{inegK} holds true.
\end{proof}

\section{Lower and upper solutions}\label{sectiunea2}

It is well known that the method of lower and upper solutions is an efficient tool in providing the existence and localization of solutions of various elliptic boundary value problems (see e.g. the survey paper \cite{[CoHa]} and the monograph \cite{CoHa1}).
This will be a key ingredient in our approach for problem \eqref{pb1}.

\begin{definition}\label{lusol}
\emph{A \textit{lower solution} (resp. \textit{upper solution}) of problem \eqref{pb1} is a $C^1$-function $\alpha:[0,T]\to(0,\infty)$ with $\phi_{p(\cdot )}(\alpha'( \cdot ))\in C^1$, $\alpha(0)=\alpha(T)$, $\alpha'(0)\geq\alpha'(T)$ (resp. $\beta:[0,T]\to(0,\infty)$ with $\phi_{p(\cdot )}(\beta'( \cdot ))\in C^1$, $\beta(0)=\beta(T)$, $\beta'(0)\leq\beta'(T)$) and
$$(\phi_{p(t)}(\alpha'))'+f(\alpha)\alpha'+g(\alpha)\geq h(t),\quad (t\in[0,T]),$$
$$\left(\mbox{resp.}\ (\phi_{p(t)}(\beta'))'+f(\beta)\beta'+g(\beta)\leq h(t),\quad (t\in [0,T])\right).$$}
\end{definition}
\smallskip

The following lemma can be proved by elementary arguments.
\begin{lemma}\label{elemlema}
Let $a,b\in\mathbb{R}$ with $a<b$ and $\varphi:[0,T]\to\mathbb{R}$ be a continuous function. If there are $\tau_0, \tau_1\in [0,T]$ such that $\varphi(\tau_0)\leq a$ and $\varphi(\tau_1) \geq b$, then there exist $t_0, t_1\in [0,T]$ with $\varphi(t_0)=a$, $\varphi(t_1)=b$ and
$$\mbox{if } t_0<t_1, \mbox{ then }\varphi(t)\in[a,b],\qquad  (t\in[t_0,t_1]),$$
respectively,
$$\mbox{if } t_1<t_0, \mbox{ then }\varphi(t)\in[a,b],\qquad  (t\in[t_1,t_0]).$$
\end{lemma}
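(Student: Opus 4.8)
The plan is to reduce to the case $\tau_0<\tau_1$ and then build $t_1$ and $t_0$ by two successive applications of the intermediate value theorem, each time extracting an extremal point of a level set. First I would dispose of the degenerate case: $\tau_0=\tau_1$ is impossible, since it would force $a\geq\varphi(\tau_0)=\varphi(\tau_1)\geq b$, contradicting $a<b$. Hence either $\tau_0<\tau_1$ or $\tau_1<\tau_0$, and these two cases are symmetric (interchange the roles of $\tau_0,\tau_1$, equivalently replace $\varphi(t)$ by $\varphi(T-t)$), so it suffices to treat $\tau_0<\tau_1$, which will produce points with $t_0<t_1$.

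Assume $\tau_0<\tau_1$. Since $\varphi(\tau_0)\leq a<b\leq\varphi(\tau_1)$ and $\varphi$ is continuous, the set $\{t\in[\tau_0,\tau_1]:\varphi(t)=b\}$ is nonempty by the intermediate value theorem, and it is closed, hence compact; let $t_1$ be its smallest element. Then $t_1>\tau_0$ because $\varphi(\tau_0)<b$, and by minimality of $t_1$ together with the intermediate value theorem one gets $\varphi(t)<b$ for all $t\in[\tau_0,t_1)$ (a point of $[\tau_0,t_1)$ with value $\geq b$ would, via the intermediate value theorem applied on $[\tau_0,\cdot]$, yield a point of the level set strictly before $t_1$). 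Consequently $\varphi(t)\leq b$ on all of $[\tau_0,t_1]$.

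Next I would locate $t_0$ inside the shrunken interval $[\tau_0,t_1]$. Since $\varphi(\tau_0)\leq a<b=\varphi(t_1)$, the set $\{t\in[\tau_0,t_1]:\varphi(t)=a\}$ is nonempty — either $\tau_0$ itself lies in it, or the intermediate value theorem produces a point of $(\tau_0,t_1)$ where $\varphi=a$ — and again closed, so it has a largest element $t_0$; moreover $t_0<t_1$ because $\varphi(t_0)=a\neq b=\varphi(t_1)$. By maximality of $t_0$ and the intermediate value theorem, $\varphi(t)>a$ for all $t\in(t_0,t_1]$, hence $\varphi(t)\geq a$ throughout $[t_0,t_1]$. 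Combining this with $\varphi\leq b$ on $[\tau_0,t_1]\supseteq[t_0,t_1]$ gives $\varphi(t)\in[a,b]$ for every $t\in[t_0,t_1]$, as required; the case $\tau_1<\tau_0$ is handled identically after relabelling and yields $t_1<t_0$.

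The whole argument is elementary, and there is no real obstacle beyond bookkeeping; the one point demanding a little care is the \emph{order} of the two constructions. One must first pin down $t_1$ on $[\tau_0,\tau_1]$ and only afterwards search for $t_0$ within $[\tau_0,t_1]$, taking the \emph{last} crossing of level $a$ before $t_1$. Choosing the two points by independent extremal conditions on the full interval $[\tau_0,\tau_1]$ need not produce nested points between which $\varphi$ is confined to $[a,b]$, so the nesting $[\tau_0,t_1]\supseteq[t_0,t_1]$ is what makes the two one-sided bounds fit together.
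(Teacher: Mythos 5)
Your proof is correct and complete: the reduction to the case $\tau_0<\tau_1$, the choice of $t_1$ as the \emph{first} point of $[\tau_0,\tau_1]$ where $\varphi=b$, and then of $t_0$ as the \emph{last} point of $[\tau_0,t_1]$ where $\varphi=a$, together with the two IVT arguments establishing $\varphi<b$ on $[\tau_0,t_1)$ and $\varphi>a$ on $(t_0,t_1]$, give exactly the required confinement $\varphi(t)\in[a,b]$ on $[t_0,t_1]$. The paper states this lemma without proof (``can be proved by elementary arguments''), and your argument --- including the correct observation that the two extremal choices must be nested rather than made independently on $[\tau_0,\tau_1]$ --- is the standard one that fills this gap.
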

\smallskip

\begin{theorem}\label{lemalu}
If problem \eqref{pb1} has a lower solution $\alpha$ and an upper solution $\beta$ such that $\alpha(t)\leq\beta(t)$ for all $t\in[0,T]$,  then it has a solution $u$ with $\alpha(t)\leq u(t)\leq\beta(t)$ for all $t\in[0,T]$.
\end{theorem}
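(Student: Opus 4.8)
The strategy is the classical truncation-and-penalization argument for lower and upper solutions, recast so that it rests only on the continuation theorem (Theorem~\ref{thHMMT}). First I would modify the nonlinearity so as to work on all of $\mathbb{R}$ and to force a priori bounds. For $t\in[0,T]$ and $x\in\mathbb{R}$, set
$$
\gamma(t,x)=\begin{cases}\beta(t), & x>\beta(t),\\ x, & \alpha(t)\le x\le\beta(t),\\ \alpha(t), & x<\alpha(t),\end{cases}
$$
and define the modified right-hand side
$$
\ell(t,x,y)=h(t)-f(\gamma(t,x))\,y-g(\gamma(t,x))-(x-\gamma(t,x)).
$$
Note that $g(\gamma(t,x))$ makes sense because $\alpha,\beta$ take values in $(0,\infty)$ and $\gamma(t,x)\in[\alpha(t),\beta(t)]\subset(0,\infty)$; the penalization term $-(x-\gamma(t,x))$ is the usual device that pushes solutions into the strip. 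Since $f,g,\alpha,\beta,h$ are continuous, $\ell$ is a Carath\'eodory function (indeed continuous), and it is bounded on $[0,T]\times[-\rho,\rho]^2$ uniformly in the first argument once we control $y$; more to the point, $|g(\gamma(t,x))|$ and $|f(\gamma(t,x))|$ are bounded by constants depending only on $\alpha,\beta$, independently of $x$.

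\textbf{Key steps.} I would then consider the auxiliary periodic problem \eqref{pbell} with this $\ell$ and apply Theorem~\ref{thHMMT} with a large ball $\Omega=B(0,R)$ in $C^1_T$. Step~1: \emph{a priori bounds.} For $\lambda\in(0,1)$ let $u$ solve \eqref{pblambda}. Integrating over $[0,T]$ and using the periodic conditions and $p(0)=p(T)$ gives $\int_0^T\!\ell(t,u,u')\,dt=0$; since $f,g$ composed with $\gamma$ are bounded and $\int_0^T(\phi_{p(t)}(u'))'\,dt=0$ this forces $\int_0^T(u-\gamma(t,u))\,dt$ to be bounded, hence $u$ takes at least one value in a bounded set (essentially, $\overline u$ is controlled). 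Combined with a bound on $\|u'\|_{L^1}$ obtained as in the proofs above --- multiply by $u'$, or rather use the boundedness of $\ell$ and a Rolle/Sobolev argument exactly as in Theorem~\ref{thepsilon} --- one gets $\|u\|_\infty\le C_0$ and then $\|u'\|_\infty\le C_1$ with $C_0,C_1$ independent of $\lambda$. Choose $R$ larger than $C_0+C_1$. Step~2: \emph{the finite-dimensional conditions $(H_2),(H_3)$.} Here $\mathcal L(a)=\frac1T\int_0^T\ell(t,a,0)\,dt=\overline h-\frac1T\int_0^T[g(\gamma(t,a))+(a-\gamma(t,a))]\,dt$; for $a$ large positive, $\gamma(t,a)=\beta(t)$ so the penalization term $-(a-\beta(t))\to-\infty$, making $\mathcal L(a)<0$; for $a$ large negative, $-(a-\alpha(t))\to+\infty$, making $\mathcal L(a)>0$. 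Hence $\mathcal L$ has no zero on $\partial\Omega\cap\mathbb{R}=\{\pm R\}$ for $R$ large, and by the sign change $\deg_B[\mathcal L,\Omega\cap\mathbb{R},0]=-1\ne0$. Theorem~\ref{thHMMT} then yields a solution $u\in\overline\Omega$ of \eqref{pbell}.

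\textbf{Step~3: the solution lies in the strip.} This is the step I expect to be the main obstacle, and it is where the sign conditions on $\alpha',\beta'$ at the endpoints enter. I claim $\alpha(t)\le u(t)\le\beta(t)$ on $[0,T]$; by symmetry treat $u\le\beta$. Suppose $\max_{t}(u-\beta)(t)=:m>0$. If the max is attained at an interior $t^*$, then $u'(t^*)=\beta'(t^*)$ and, since $\phi_{p(\cdot)}$ is increasing, $(\phi_{p(t)}(u'))'(t^*)\le(\phi_{p(t)}(\beta'))'(t^*)$ in the appropriate one-sided/distributional sense; plugging $\gamma(t^*,u(t^*))=\beta(t^*)$ into the equation for $u$ and comparing with the upper-solution inequality gives
$$
(\phi_{p(t)}(\beta'))'(t^*)+f(\beta(t^*))\beta'(t^*)+g(\beta(t^*))\ \ge\ (\phi_{p(t)}(u'))'(t^*)+f(\beta(t^*))u'(t^*)+g(\beta(t^*))+(u-\beta)(t^*) = h(t^*)+m,
$$
i.e. $(\phi_{p(t)}(\beta'))'(t^*)+f(\beta(t^*))\beta'(t^*)+g(\beta(t^*))>h(t^*)$, contradicting the upper-solution definition. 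The delicate point is that $(\phi_{p(t)}(u'))'$ need only exist a.e., so one argues instead that on a maximal interval $[t_0,t_1]$ where $u-\beta\ge m/2>0$ one has $\gamma(t,u)=\beta(t)$ throughout, hence $\phi_{p(t)}(u')-\phi_{p(t)}(\beta')$ is $C^1$ there with derivative $\ge f(\beta)(u'-\beta')+(u-\beta)$; integrating and using $u'(t_0)-\beta'(t_0)$ and $u'(t_1)-\beta'(t_1)$ having the "wrong" signs at a strict interior maximum, or --- if the max sits at the endpoint $0\equiv T$ --- invoking $\beta'(0)\le\beta'(T)$ together with $u'(0)=u'(T)$ and $\beta(0)=\beta(T)$ to glue $[0,T]$ into a single interval around the maximum, forces a contradiction exactly as in the interior case. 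This is precisely what Lemma~\ref{elemlema} is designed to package: it lets one pick the relevant crossing points and reduce to an interval on which $\gamma(t,u)=\beta(t)$. Once $u$ is trapped in the strip, $\gamma(t,u(t))=u(t)$ and the penalization term vanishes, so $u$ actually solves \eqref{pb1}; and $u$ is a classical solution by Remark~\ref{remark1}, with $u(t)\in[\alpha(t),\beta(t)]\subset(0,\infty)$, completing the proof.
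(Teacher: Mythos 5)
Your overall strategy (truncation via $\gamma$, a penalized modified problem, Theorem~\ref{thHMMT} on a large ball, then a comparison argument to trap the solution between $\alpha$ and $\beta$) is exactly the paper's, but there are two concrete problems in the execution. First, the penalization term has the wrong sign: you define $\ell$ with $-(x-\gamma(t,x))$, whereas the argument requires $+(x-\gamma(t,x))$ (so that where $u>\beta$ the term pushes $(\phi_{p(t)}(u'))'$ \emph{up}, forbidding an interior maximum above $\beta$). With your sign, the equation at a maximum point $t^*$ of $u-\beta$ gives $(\phi_{p(t)}(u'))'(t^*)+f(\beta(t^*))u'(t^*)+g(\beta(t^*))=h(t^*)-m$, not $h(t^*)+m$, and no contradiction with the upper-solution inequality results; your displayed chain in Step~3 silently switches to the opposite sign (the inserted ``$+(u-\beta)(t^*)$'' is not justified by $(\phi_{p(t)}(u'))'(t^*)\le(\phi_{p(t)}(\beta'))'(t^*)$ alone). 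Flipping the sign repairs Step~3 and merely changes the Brouwer degree from $-1$ to $+1$. (Your worry about $(\phi_{p(t)}(u'))'$ existing only a.e.\ is unnecessary: since the truncated $\ell$ is continuous, solutions are classical by Remark~\ref{remark1}, and the paper runs the pointwise argument directly, treating the endpoint case with $\beta'(0)\le\beta'(T)$ as you indicate.)

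Second, Step~1 as written does not produce the a priori bounds. The truncated $\ell$ is \emph{not} bounded: it grows linearly in $x$ through the penalization and linearly in $y$ through $f(\gamma(t,x))y$. Consequently the integrated identity only yields $\bigl|\int_0^T(u-\gamma(t,u))\,dt\bigr|\le C\bigl(1+\|u'\|_{L^1}\bigr)$, so the bound on $\overline u$ is not available before a bound on $u'$; and the Rolle argument of Theorem~\ref{thepsilon} does not transfer, because there the friction term is the exact derivative $(\Theta'(u))'$ and telescopes, while $f(\gamma(t,u))u'$ does not (the truncation destroys the primitive structure), leaving only the self-referential estimate $\|u'\|_\infty^{p_--1}\le C(1+\|u'\|_\infty)$, which gives nothing when $p_-\le 2$. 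The paper's order of operations is what works: first prove $\|u\|_\infty<M_1$ by a pointwise maximum-principle argument that uses the (correctly signed) penalization and a constant $M_1$ chosen so that $h-g(\beta)+M_1-\beta>0$ and $h-g(\alpha)-M_1-\alpha<0$; only then, with $|\ell^*|\le c_3(1+|u'|)$, run a Bernstein--Nagumo argument (this is where Lemma~\ref{elemlema} is actually used, to select level-crossing times of $|u'(t)|^{p(t)-1}$) to get $\|u'\|_\infty<M_2$. With these two corrections your outline coincides with the paper's proof.
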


\begin{proof} Let $\gamma:[0,T]\times\mathbb{R}\to(0,\infty)$ be the continuous function given by
$$\gamma(t,x):=\left \{
            \begin{array}{lll}
		      \beta(t),\quad &\mbox{ if}& x>\beta(t),\\
		      x,\quad\ &\mbox{ if}& \alpha(t)\leq x\leq\beta(t),\\
              \alpha(t),\quad &\mbox{ if}& x<\alpha(t)
			\end{array}
\right.$$
and define $\ell^{*}:[0,T]\times\mathbb{R}^2\to\mathbb{R}$ by $$\ell^*(t,x,y)=h(t)-g(\gamma(t,x))-f(\gamma(t,x))y+x-\gamma(t,x).$$ Note that, as $\ell^{*}$ is continuous, below the notion of solution will be understood in classical sense (see Remark \ref{remark1}).  We introduce the modified problem
\begin{equation}\label{pbmod}
	\left \{
            \begin{array}{ll}
		      (\phi_{p(t)}(u'))'=\ell^*(t,u,u')\quad \mbox{ in }[0,T],\\
		      u(0)-u(T)=0=u'(0)-u'(T)
			\end{array}
\right.
\end{equation}
and, in order to apply Theorem \ref{thHMMT}, for $\lambda\in(0,1)$, we consider the homotopy problem associated with \eqref{pbmod}
\begin{equation}\label{pbmodlambda}
	\left \{
            \begin{array}{ll}
		      (\phi_{p(t)}(u'))'=\lambda\ell^*(t,u,u')\quad \mbox{ in }[0,T],\\
		      u(0)-u(T)=0=u'(0)-u'(T).
			\end{array}
\right.
\end{equation}

Next, let $u \equiv u_{\lambda}$ be a solution of problem \eqref{pbmodlambda} for some $\lambda\in(0,1)$. We choose $M_1>\|\beta\|_\infty$ a constant such that
\begin{equation}\label{M1}
    h(t)-g(\beta(t))+M_1-\beta(t)>0\ \mbox{ and }\ h(t)-g(\alpha(t))-M_1-\alpha(t)<0 \quad (t\in[0,T])
\end{equation}
and we show that $\|u\|_\infty<M_1$. Otherwise, suppose that there exists $t_0\in[0,T]$ such that $u(t_0)=\max_{t\in[0,T]}u(t)\geq M_1$ or $u(t_0)=\min_{t\in[0,T]}u(t)\leq -M_1$.
If $t_0\in(0,T)$ and $u(t_0)\geq M_1$, then $u'(t_0)=0$ and since $u$ is a solution of \eqref{pbmodlambda}, on account of $M_1>\|\beta\|_\infty$ and \eqref{M1}, one has
\begin{eqnarray*}
  (\phi_{p(t)}(u'))'_{t=t_0} &=& \lambda\ell^*(t_0,u(t_0),0)=\lambda(h(t_0)-g(\beta(t_0))+u(t_0)-\beta(t_0)) \\
  &\geq& \lambda(h(t_0)-g(\beta(t_0))+M_1-\beta(t_0))>0.
\end{eqnarray*}
So, there exists a positive constant $\delta$ such that $\phi_{p(t)}(u'(t))$ is strictly increasing on $(t_0,t_0+\delta)$. Thus, $$\phi_{p(t)}(u'(t))>\phi_{p(t_0)}(u'(t_0))=0,\quad (t\in(t_0,t_0+\delta)),$$ which shows that $u'(t)>0,\ t\in (t_0,t_0+\delta)$. Hence, $u$ is strictly increasing on $(t_0,t_0+\delta)$, contradicting $u(t_0)=\max_{t\in[0,T]}u(t)$. In a similar way, if $t_0\in(0,T)$ and $u(t_0)\leq-M_1$, since $-M_1<\alpha(t)$, for all $t\in[0,T]$ and using the second inequality in \eqref{M1}, we obtain
$$(\phi_{p(t)}(u'))'_{t=t_0} = \lambda\ell^*(t_0,u(t_0),0)\leq\lambda(h(t_0)-g(\alpha(t_0))-M_1-\alpha(t_0))<0,$$
which implies that there is $\delta>0$ such that $u$ is strictly decreasing on $(t_0,t_0+\delta)$, i.e. again a contradiction with $u(t_0)=\min_{t\in[0,T]}u(t)$.

If $t_0=0$ or $t_0=T$ and $u(0)=u(T) \geq M_1$, then $u'(0) \leq 0$ and $u'(T) \geq 0$ implying that $u'(0)=0$ and arguing as above we get a contradiction. The case when $t_0=0$ or $t_0=T$ under the assumption $u(0)=u(T) \leq -M_1$ can be treated by similar arguments.

Next, we claim that there exists a constant $M_2>0$, such that $\|u'\|_\infty<M_2$. To prove this, observe first that
\begin{equation}\label{nagumo}
    |(\phi_{p(t)}(u'))'|\leq|\ell^*(t,u,u')|\leq\|h\|_{\infty}+c_1+c_2|u'|+2M_1\leq c_3(1+|u'|),\quad (t\in[0,T]),
\end{equation}
where $c_1$, $c_2$ and $c_3$ are positive constants depending on $M_1$. Since $u(0)=u(T)$, there is some $\tau_0\in[0,T]$ such that $u'(\tau_0)=0$. Then, obviously one has
\begin{equation}\label{utau0}
    0=|u'(\tau_0)|^{p(\tau_0)-1}<\left(\frac{2M_1}{T}+1\right)^{p^+-1}.
\end{equation}
\noindent We choose the constant $M_2>1$ such that $$\frac{M_2^{p_--1}-\left(\displaystyle\frac{2M_1}{T}+1\right)^{p^+-1}}{c_3}>T+2M_1$$
and suppose by contradiction that there is a point $\tau_1\in [0,T]$ such that $|u'(\tau_1)|\geq M_2$, implying that
\begin{equation}\label{utau1}
    |u'(\tau_1)|^{p(\tau_1)-1}\geq M_2^{p_--1}.
\end{equation}
In view of \eqref{utau0}, \eqref{utau1} and Lemma \ref{elemlema}, there exist $t_0,t_1\in[0,T]$ with $$|u'(t_0)|^{p(t_0)-1}=\left(\displaystyle\frac{2M_1}{T}+1\right)^{p^+-1},\ \ |u'(t_1)|^{p(t_1)-1}=M_2^{p_--1}$$ and
$$\mbox{if } t_0<t_1, \mbox{ then } |u'(t)|^{p(t)-1}\in\left[\left(\displaystyle\frac{2M_1}{T}+1\right)^{p^+-1},M_2^{p_--1}\right],\ \ (t\in[t_0,t_1]),$$
respectively,
$$\mbox{if } t_1<t_0, \mbox{ then } |u'(t)|^{p(t)-1}\in\left[\left(\displaystyle\frac{2M_1}{T}+1\right)^{p^+-1},M_2^{p_--1}\right],\ \  (t\in[t_1,t_0]).$$
We discuss the case $t_0<t_1$; similar arguments work when $t_1<t_0$. Using the first change of variables formula, we have
\begin{eqnarray}\label{estim0}
  T+2M_1 &<& \frac{M_2^{p_--1}-\left(\displaystyle\frac{2M_1}{T}+1\right)^{p^+-1}}{c_3} =\int\limits_{\left(\frac{2M_1}{T}+1\right)^{p^+-1}}^{M_2^{p_--1}}\frac{1}{c_3}\ ds\nonumber\\
  &=& \int\limits_{|u'(t_0)|^{p(t_0)-1}}^{|u'(t_1)|^{p(t_1)-1}}\frac{1}{c_3}\ ds=\int_{t_0}^{t_1}\frac{(|u'(t)|^{p(t)-1})'}{c_3}\ dt.
\end{eqnarray}
Note that $u'$ does not vanish in $[t_0,t_1]$. If $u'(t)$ is strictly positive on $[t_0,t_1]$, then on account of \eqref{estim0} and \eqref{nagumo}, we obtain
\begin{eqnarray*}
  T+2M_1 &<& \int_{t_0}^{t_1}\frac{(\phi_{p(t)}(u'))'}{c_3}\ dt=\int_{t_0}^{t_1}\frac{\lambda\ell^*(t,u,u')}{c_3}\ dt \\
  &\leq& \int_{t_0}^{t_1}(1+u'(t)) dt\leq T+|u(t_1)-u(t_0)|<T+2M_1,
\end{eqnarray*}
i.e., a contradiction. Similarly, if $u'(t)$ is strictly negative on $[t_0,t_1]$, one gets
\begin{eqnarray*}
  T+2M_1 &<& \int_{t_0}^{t_1}\frac{-(\phi_{p(t)}(u'))'}{c_3}\ dt=\int_{t_0}^{t_1}\frac{-\lambda\ell^*(t,u,u')}{c_3}\ dt \\
  &\leq& \int_{t_0}^{t_1}(1-u'(t)) dt\leq T+|u(t_0)-u(t_1)|<T+2M_1,
\end{eqnarray*}
 a contradiction, again.
\smallskip

Hence, $\|u\|_1<M_0$, where $M_0:=M_1+M_2$, and we shall apply Theorem \ref{thHMMT} with $\Omega:=B(0,M_0)$ and $\ell=\ell^*$.
Clearly,  $(H_1)$ in Theorem \ref{thHMMT} is satisfied.
From \eqref{M1}, we have that $\ell^*(t,-M_0,0)<0$ and $\ell^*(t,M_0,0)>0$ for all $t\in [0,T]$, which yield
$$\mathcal{L}(-M_0)=\frac{1}{T}\int_0^T\ell^*(t,-M_0,0)dt<0,\ \ \ \mathcal{L}(M_0)=\frac{1}{T}\int_0^T\ell^*(t,M_0,0)dt>0$$
and, as $\partial \Omega \cap \mathbb{R}=\{ -M_0, M_0 \}$, hypothesis $(H_2)$  is fulfilled. Finally,  $deg_B[\mathcal{L},\Omega\cap\mathbb{R},0]=1$, hence $(H_3)$ also holds true. Therefore, problem \eqref{pbmod} has at least one solution in $\overline{\Omega}$.

We conclude the proof by showing that if $u$ is a solution of \eqref{pbmod}, then $\alpha(t)\leq u(t)\leq\beta(t)$ for all $t\in[0,T]$. Suppose by contradiction that there is some $t_0\in[0,T]$ such that $$\max_{t\in[0,T]}(\alpha(t)-u(t))=\alpha(t_0)-u(t_0)>0.$$
If $t_0\in(0,T)$, then $\alpha'(t_0)=u'(t_0)$ and there is a sequence $\{t_k\}$ in $(t_0, T)$ converging to $t_0$ such that $\alpha'(t_k)-u'(t_k)\leq0$. As $\phi_{p(t)}: \mathbb{R} \to \mathbb{R}$
is an increasing homeomorphism for all $t\in [0,T])$, this yields
$$\phi_{p(t_k)}(\alpha'(t_k))-\phi_{p(t_0)}(\alpha'(t_0))\leq\phi_{p(t_k)}(u'(t_k))-\phi_{p(t_0)}(u'(t_0)),$$
implying that
$$\left(\phi_{p(t)}(\alpha'(t))\right)_{t=t_0}'\leq\left(\phi_{p(t)}(u'(t))\right)_{t=t_0}'.$$
Thus, because $\alpha'(t_0)=u'(t_0)$ and $\alpha$ is a lower solution of \eqref{pb1}, we obtain
\begin{eqnarray*}
  \left(\phi_{p(t)}(\alpha'(t))\right)_{t=t_0}' &\leq& \left(\phi_{p(t)}(u'(t))\right)_{t=t_0}'=\ell^*(t_0,u(t_0),u'(t_0)) \\
  &=& h(t_0)-g(\alpha(t_0))-f(\alpha(t_0))\alpha'(t_0)+u(t_0)-\alpha(t_0)\\
  &<& h(t_0)-g(\alpha(t_0))-f(\alpha(t_0))\alpha'(t_0)\leq \left(\phi_{p(t)}(\alpha'(t))\right)_{t=t_0}',
\end{eqnarray*}
a contradiction. If $$\max_{t\in[0,T]}(\alpha(t)-u(t))=\alpha(0)-u(0)=\alpha(T)-u(T)>0,$$
then $\alpha'(0)-u'(0)\leq0$ and $\alpha'(T)-u'(T)\geq0$. Since $\alpha$ is a lower solution of \eqref{pb1}, one has $\alpha'(0)\geq\alpha'(T)$ and using this, we get that $\alpha'(0)-u'(0)=0=\alpha'(T)-u'(T)$. It follows  $\phi_{p(0)}(\alpha'(0))=\phi_{p(0)}(u'(0))$. On the other hand, $\max_{t\in[0,T]}(\alpha(t)-u(t))=\alpha(0)-u(0)>0$ implies, reasoning as above, that
$$\left(\phi_{p(t)}(\alpha'(t))\right)_{t=0}'\leq\left(\phi_{p(t)}(u'(t))\right)_{t=0}'.$$
Using this inequality, together with $\alpha'(0)=u'(0)$ and the fact that $\alpha$ is a lower solution of \eqref{pb1}, we again proceed as above to obtain a contradiction. Consequently, $\alpha(t)\leq u(t)$ for all $t\in[0,T]$. Analogously, using that $\beta$ is an upper solution of problem \eqref{pb1}, it follows that $u(t)\leq\beta(t)$ for all $t\in[0,T]$. The proof is now complete.
\end{proof}
\smallskip

\begin{remark}\label{rem2}\emph{The variant of Theorem \ref{lemalu} with an increasing and odd homeomorphism $\phi:\mathbb{R} \to \mathbb{R}$ instead of $\phi_{p(t)}$,  can be inferred as a particular case of \cite[Theorem 8.12]{[RaStTv]}.  }
\end{remark}

\section{Main result}\label{final}

We are now in position to give the main result concerning problem \eqref{pb1}.

\begin{theorem}\label{mainth}
If there exists a constant $\alpha>0$ with $g(\alpha) \geq \max_{[0,T]}h$ and it holds
\begin{equation}\label{conditieg}
\limsup_{x\to+\infty}g(x)<\overline{h},
\end{equation}
then problem \eqref{pb1} has at least one solution $u$ with $u(t)\geq\alpha$, for all $t\in[0,T]$.
\end{theorem}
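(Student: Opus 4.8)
The plan is to invoke the method of lower and upper solutions, i.e. Theorem~\ref{lemalu}. The constant function $\alpha$ is a lower solution of \eqref{pb1}: it lies in $C^1$, is positive, satisfies $\phi_{p(\cdot)}(\alpha'(\cdot)) \equiv 0 \in C^1$ and $\alpha(0) = \alpha(T)$, $\alpha'(0) = 0 = \alpha'(T)$, and $(\phi_{p(t)}(\alpha'))' + f(\alpha)\alpha' + g(\alpha) = g(\alpha) \ge \max_{[0,T]} h \ge h(t)$ for all $t \in [0,T]$, by hypothesis. So the whole difficulty is to produce an upper solution $\beta \ge \alpha$.

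I would construct $\beta$ as a large vertical shift of the zero-mean solution given by Proposition~\ref{lemaf0}. Fix a continuous extension $\theta : \mathbb{R} \to \mathbb{R}$ of $f$ (say $\theta(x) = f(x^+)$) and put $e := h - \overline h \in C \subset L^1$, which has mean zero. By \eqref{conditieg} there is $d > 0$ with $g(x) < \overline h$ for all $x \ge d$. Proposition~\ref{lemaf0} provides a constant $K = K(T,p_-,h)$, independent of the shift, such that for every $c \in \mathbb{R}$ the problem \eqref{pbc} has a solution $v_c$ with $\overline{v_c} = 0$ and $\|v_c\|_\infty \le K$. Now pick $c \ge K + \max\{\alpha, d\}$, set $v := v_c$ and $\beta := c + v$. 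Then $\beta(t) \ge c - K \ge \max\{\alpha, d\}$, so $\beta \ge \alpha > 0$ on $[0,T]$, $\beta(t) \ge d$ for every $t$, and $\theta(c + v(t)) = f(\beta(t))$ because $\beta(t) > 0$. Since the data are continuous, Remark~\ref{remark1} gives $\phi_{p(\cdot)}(\beta'(\cdot)) = \phi_{p(\cdot)}(v'(\cdot)) \in C^1$; moreover $\beta(0) = \beta(T)$ and $\beta'(0) = \beta'(T)$, hence $\beta'(0) \le \beta'(T)$. Finally, using that $v$ solves \eqref{pbc},
$$(\phi_{p(t)}(\beta'))' + f(\beta)\beta' + g(\beta) = (\phi_{p(t)}(v'))' + \theta(c + v)v' + g(\beta) = e(t) + g(\beta(t)) = h(t) - \overline h + g(\beta(t)) < h(t),$$
because $g(\beta(t)) < \overline h$. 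Thus $\beta$ is an upper solution of \eqref{pb1} and $\alpha \le \beta$ on $[0,T]$.

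Applying Theorem~\ref{lemalu} with this pair then gives a solution $u$ of \eqref{pb1} with $\alpha \le u(t) \le \beta(t)$ for all $t$, in particular $u(t) \ge \alpha$, which is the assertion. The main obstacle is precisely the construction of the upper solution: a constant $\beta$ cannot work, since \eqref{conditieg} compares $g$ only with the mean $\overline h$ and not with $h$ pointwise, so the oscillation $h - \overline h$ must be absorbed into the $\phi_{p(t)}$- and friction terms. The crucial feature that makes this work is that the bound $K$ on $\|v_c\|_\infty$ in Proposition~\ref{lemaf0} is independent of $c$, which lets us slide $\beta$ upward into the region $\{\,x \ge \alpha\,\} \cap \{\,g < \overline h\,\}$ while keeping the differential inequality intact.
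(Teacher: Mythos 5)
Your proof is correct and follows essentially the same route as the paper: the constant $\alpha$ as lower solution, and an upper solution obtained by shifting the zero-mean solution of $(\phi_{p(t)}(v'))'+\theta(c+v)v'=\tilde h$ from Proposition~\ref{lemaf0} far enough upward, using precisely the $c$-independence of the bound $K$. The only cosmetic difference is your choice $c\geq K+\max\{\alpha,d\}$ versus the paper's $c_\delta=\delta+K$ with $\delta>\alpha$; these are interchangeable.
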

\begin{proof}
Clearly, $\alpha$ is a lower solution of problem \eqref{pb1}. On the other hand, from assumption \eqref{conditieg}, there exists a constant $\delta>\alpha$ such that $g(x) <\overline{h}$, for all $x\geq\delta$. We define the continuous function $f_0:\mathbb{R}\to\mathbb{R}$ by
$$f_0(x)=\left\{
           \begin{array}{ll}
             f(0), & \hbox{if $x<0$,} \\
             f(x), & \hbox{if $x\geq 0$}
           \end{array}
         \right.
$$
and for $c\in\mathbb{R}$ let us consider the problem
\begin{equation}\label{pbcf0}
	\left \{
            \begin{array}{ll}
		      (\phi_{p(t)}(v'))'+f_0(c+v)v'=\tilde{h}(t)\quad \mbox{ in }[0,T],\\
		      v(0)-v(T)=0=v'(0)-v'(T).
			\end{array}
\right.
\end{equation}
which is of type \eqref{pbc}. Then, by Proposition \ref{lemaf0}, there is a constant $K=K(T,p_-,\tilde{h})>0$ so that any solution $v$ of \eqref{pbcf0} with $\overline{v}=0$ satisfies $\|v\|_\infty\leq K$. Note that a solution of \eqref{pbcf0} is a classical one (see Remark \ref{remark1}).

We choose $c_\delta=\delta+K$ and let $v_\delta$ with $\overline{v}_\delta=0$ be a solution of problem
\begin{equation*}
	\left \{
            \begin{array}{ll}
		      (\phi_{p(t)}(v'))'+f_0(c_\delta+v)v'=\tilde{h}(t)\quad \mbox{ in }[0,T],\\
		      v(0)-v(T)=0=v'(0)-v'(T).
			\end{array}
\right.
\end{equation*}
One has that $\beta(t):=c_\delta+v_\delta(t)\geq c_\delta-K=\delta$ and
$$(\phi_{p(t)}(\beta'(t)))'+f(\beta(t))\beta'(t)+g(\beta(t))$$
$$= (\phi_{p(t)}(v_\delta'(t)))'+f_0(c_\delta+v_\delta(t))v_\delta'(t)+g(c_\delta+v_\delta(t))<\tilde{h}(t)+\overline{h}=h(t)\quad (t\in[0,T]),$$
i.e. $\beta$ is an upper solution for \eqref{pb1}. Thus, as $\beta(t)\geq\alpha$ for all $t\in [0,T]$, by Theorem \ref{lemalu} there exists
 a solution $u$ of problem \eqref{pb1} with $\alpha\leq u(t)\leq\beta(t)$, for all $t\in[0,T]$,  and the proof is complete.
\end{proof}
\smallskip

\begin{remark}\label{remMJM1}\emph{The particular case of Theorem \ref{mainth} with $p\in (1, \infty)$ constant instead of  $p(t) \in C\left([0,T];(1,\infty)\right)$ was proved in \cite[Theorem 1]{[JMANS]} (also see \cite[Theorem 1]{[JMVJM]}). An extension of the result from \cite{[JMANS]} was provided in \cite[Theorem 2.3]{[RaT3]}, \cite[Theorem 8.21]{[RaStTv]}.
It is worth to notice that the classical case $p=2$ was obtained in \cite[Theorem 1]{[HaSa]} (see also \cite[Theorem 6.1]{[Maw93]}). }
\end{remark}
\medskip

The following result was first proved in \cite{[LaSo]} for $p=2$ and $f=0$ (see the proof of Theorem 2.1 therein).

\begin{corollary}\label{cormth}
If the continuous function $g:(0,\infty)\to (0,\infty)$ satisfies
\begin{equation*}
g(x)\to+\infty\ \ \mbox{as}\  x\to0+ \; \mbox{ and } \; g(x)\to0\  \mbox{as}\  x\to+\infty,
\end{equation*}
then problem \eqref{pb1} has at least one solution if and only if $\overline{h}>0$.
\end{corollary}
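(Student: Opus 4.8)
The plan is to derive Corollary \ref{cormth} from Theorem \ref{mainth} together with an elementary necessity argument. For the \emph{sufficiency} direction, suppose $\overline{h}>0$. Since $g(x)\to 0$ as $x\to+\infty$ and $g$ is positive, in particular $\limsup_{x\to+\infty}g(x)=0<\overline{h}$, so condition \eqref{conditieg} of Theorem \ref{mainth} holds. Moreover, since $g(x)\to+\infty$ as $x\to0+$, there exists a constant $\alpha>0$ (taken small) with $g(\alpha)\geq\max_{[0,T]}h$; note $\max_{[0,T]}h$ is finite because $h$ is continuous. Hence both hypotheses of Theorem \ref{mainth} are satisfied and problem \eqref{pb1} has a solution $u$ with $u(t)\geq\alpha>0$ for all $t\in[0,T]$.

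For the \emph{necessity} direction, suppose $u$ is a solution of \eqref{pb1}, so $u\in C_T^1$, $u(t)>0$ on $[0,T]$, and $\phi_{p(\cdot)}(u'(\cdot))\in C^1$ with the differential equation holding everywhere. Integrate the equation over $[0,T]$. By the periodic boundary conditions and $p(0)=p(T)$, one has $\int_0^T(\phi_{p(t)}(u'(t)))'\,dt=\phi_{p(T)}(u'(T))-\phi_{p(0)}(u'(0))=0$. For the friction term, writing $F$ for a primitive of $f$ on $(0,\infty)$, we get $\int_0^T f(u(t))u'(t)\,dt=F(u(T))-F(u(0))=0$ again by periodicity of $u$. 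Therefore integrating yields $\int_0^T g(u(t))\,dt=\int_0^T h(t)\,dt=T\overline{h}$. Since $g$ takes values in $(0,\infty)$ and $u$ is continuous, the left-hand side is strictly positive, so $\overline{h}>0$.

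The main (modest) obstacle is simply being careful about why $\alpha$ can be chosen; this is immediate from $g(x)\to+\infty$ as $x\to0+$ together with continuity of $h$ on the compact interval $[0,T]$, so it is not a real difficulty. A secondary point worth a line is that the hypotheses of Corollary \ref{cormth} are a special case of those in Theorem \ref{mainth} only after observing positivity of $g$ makes $\limsup_{x\to+\infty}g(x)=0$ (rather than merely $\leq 0$), which is exactly what the strict inequality in \eqref{conditieg} requires. No further estimates or compactness arguments are needed, since all the analytic work is already packaged in Theorem \ref{mainth} and in Proposition \ref{lemaf0} on which it rests.
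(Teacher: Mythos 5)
Your proposal is correct and follows essentially the same route as the paper: sufficiency by verifying the hypotheses of Theorem \ref{mainth} (using $\limsup_{x\to+\infty}g(x)=0<\overline{h}$ and choosing $\alpha$ small via the singularity at $0+$), and necessity by integrating the equation over $[0,T]$, using periodicity together with $p(0)=p(T)$ to annihilate the $(\phi_{p(t)}(u'))'$ and $f(u)u'$ terms, and the positivity of $g$ to conclude $\overline{h}>0$. No gaps.
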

\begin{proof} If $\overline{h}>0$ then the existence of a solution follows by Theorem \ref{mainth}, while the converse implication is obtained by integrating the differential equation in \eqref{pb1} over $[0,T]$ and using the fact that $g>0$.
\end{proof}
\medskip

\begin{example}
{\em If $f:[0,\infty)\to\mathbb{R}$ is continuous, $h\in C$, $\sigma>0$ and $\mu>0$, then problem
\begin{equation*}
	\left \{
            \begin{array}{ll}
		      (\phi_{p(t)}(u'))'+f(u)u'+\displaystyle\frac{\sigma}{u^\mu}=h(t)\quad \mbox{ in }[0,T],\\
              \\
		      u(0)-u(T)=0=u'(0)-u'(T),
			\end{array}
\right.
\end{equation*}
has at least one solution if and only if $\overline{h}>0$.}
\end{example}

\end{document}